\def\a{\alpha}
\newcommand{\CC}{{\mathbb C}}
\newcommand{\RR}{{\mathbb R}}
\newcommand{\PP}{{\mathbb P}}
\newcommand{\ra}{\rightarrow}
\def \-{\bar}
\newtheorem{theorem}{Theorem}[section]
\newtheorem{lemma}[theorem]{Lemma}
\newtheorem{corollary}[theorem]{Corollary}
\date{}
\begin{document}

\title{\bf Submanifolds of Hermitian symmetric spaces} 

\author{
\ \ Xiaojun Huang\footnote{ Supported in part by National Science
Foundation grant DMS-1363418}, \ \ Yuan Yuan\footnote{ Supported in
part by National Science Foundation grant DMS-1412384}}

\vspace{3cm} \maketitle

\begin{abstract}
 We study the problem of
non-relativity for a complex Euclidean space and a bounded symmetric
domain equipped with their canonical metrics. In particular, we
answer a question raised by Di Scala.

This paper is dedicated to the memory of Salah Baouendi, a great
teacher and a close friend to many of us.

\end{abstract}

\bigskip
\section{Introduction}

Holomorphic isometric embeddings have been studied extensively by
many authors. In the celebrated paper by Calabi \cite{C}, he obtained
the global extendability and rigidity  of a local holomorphic
isometry into a complex space form, among many other important
results. In particular, he proved that any complex space form cannot
be locally isometrically embedded into another complex space form
with a different curvature sign with respect to the canonical
K\"ahler metrics, respectively. In his paper, Calabi introduced the
so called diastasis function and reduced the metric tensor equation to the
functional identity for the diastasis functions. In a later
development \cite{DL1}, Di Scala and Loi generalized Calabi's
non-embeddability  result to the case of Hermitian symmetric spaces
of different types.

On the other hand, Umehara \cite{U} studied an interesting question
whether two complex space forms can share a common submanifold with
the induced metrics. Following Calabi's idea, Umehara proved that
two complex space forms with different curvature signs cannot share
a common K\"ahler submanifold. When two complex manifolds share a
common K\"ahler submanifolds with induced metrics, Di Scala and Loi
 in \cite{DL2} called them to be relatives. Furthermore, Di Scala and Loi proved
that a bounded domain with its associated Bergman metric can not be
a relative to a Hermitian symmetric space of compact type equipped
with the canonical metric. Notice that any irreducible Hermitian
symmetric space of compact type can be holomorphically isometrically
embedded into a complex project space by the classical
Nakagawa-Takagi embedding. Therefore in order to show that a
K\"ahler manifold is not a relative of a projective manifold with
induced metric, it suffices to show that it is not a relative to the
complex projective space with the Fubini-Study metric. Meanwhile it
follows from the result of Umehara \cite{U}, the complex Euclidean
space and the irreducible Hermitian symmetric space of compact type
cannot be relatives. After these studies, it remains to understand
if a complex Euclidean space and a Hermitian symmetric space of
noncompact type can be relatives.

Denote the Euclidean metric on $\CC^n$ by  $\omega_{\CC^n}$. For each $1 \leq j \leq J$, let the bounded symmetric domain $\Omega_j \subset \CC^{m_j}$ be the Harish-Chandra realization of an irreducible Hermtian symmetric space of noncompact type and let $\omega_{\Omega_j}$ be the Bergman metric on $\Omega_j$. Let $D \subset \CC^\kappa$ be a connected open set and $\omega_D$ be a K\"ahler metric on $D$, not necessarily complete. 

 In this short paper, we show that  there do not simultaneously exist  holomorphic isometric immersions
  $F: (D, \omega_D) \rightarrow (\CC^n, \omega_{\CC^n})$ and $G=(G_1, \cdots, G_J): (D, \omega_D) \rightarrow (\Omega_1, \mu_1 \omega_{\Omega_1}) \times \cdots \times (\Omega_J, \mu_J \omega_{\Omega_J})$ with $\mu_1, \cdots, \mu_J$ positive real numbers.
 As a consequence,
  a complex Euclidean space and a bounded symmetric domain cannot be relatives.
Indeed, we prove the following slightly stronger result:

\begin{theorem} \label{main1}
Let $D \subset \CC$ be a connected open subset. 
Suppose that
$F: D \rightarrow \CC^n$ and $G=(G_1, \cdots, G_J): D \rightarrow
\Omega=\Omega_1 \times \cdots \times \Omega_J$ are holomorphic
mappings such that

\begin{equation}\label{isometry}
F^*\omega_{\CC^n} =\sum_{j=1}^J \mu_j G_j^*\omega_{\Omega_j} ~~\text{on}~~D
\end{equation}
 for certain real  constants $\mu_1,
\cdots, \mu_J$. Then $F$ must be a constant map. Furthermore, if all
$\mu_j's$ are positive, then  $G$ is also a constant map.
\end{theorem}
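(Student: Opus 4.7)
The plan is to combine Calabi's hereditary property of the diastasis with the Harish--Chandra form of the Bergman kernel of each $\Omega_j$, reducing \eqref{isometry} to a polarized kernel identity that can be attacked by a positive-semidefinite (PSD) rank argument. First I would normalize: compose $F$ with a translation of $\CC^n$ and each $G_j$ with an automorphism of $\Omega_j$ (which preserves $\omega_{\Omega_j}$) so that $F(p_0) = 0$ and $G_j(p_0) = 0$ for a chosen $p_0 \in D$. By Calabi's heredity, the isometry equation then implies the diastasis identity $|F(z)|^2 = \sum_j \mu_j D_0^{\Omega_j}(G_j(z))$ on a neighborhood of $p_0$, where $D_0^{\Omega_j}(\eta) = -g_j \log N_j(\eta, \eta)$, with $N_j$ the generic norm polynomial of $\Omega_j$ (normalized so that $N_j(u, 0) = N_j(0, v) = 1$) and $g_j$ the genus. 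Polarizing by treating $z$ and $\bar z$ as independent complex variables yields the germ identity
\[
\sum_{i=1}^n F_i(z) \overline{F_i(w)} \;=\; -\sum_{j=1}^J \mu_j g_j \log N_j(G_j(z), G_j(w))
\]
at $(p_0, p_0)$ in $D \times D$, holomorphic in $z$ and anti-holomorphic in $w$.

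The next step is to analyze this identity kernel-theoretically. The left-hand side is a Hermitian PSD kernel of rank at most $n$. For the right-hand side, I would invoke the Faraut--Koranyi expansion on bounded symmetric domains of noncompact type,
\[
-\log N_j(u, v) = \sum_{\mathbf m \ne 0} c_{\mathbf m}^{(j)} K_{\mathbf m}^{(j)}(u, v),
\]
with strictly positive coefficients $c_{\mathbf m}^{(j)}$ and canonical PSD Koranyi sub-kernels $K_{\mathbf m}^{(j)}$, bihomogeneous of bidegree $(|\mathbf m|, |\mathbf m|)$. In particular, each $-\log N_j$ is a PSD kernel of infinite rank on $\Omega_j \times \Omega_j$; moreover, whenever $G_j$ is non-constant, the pulled-back components $K_{\mathbf m}^{(j)}(G_j(z), G_j(w))$ remain linearly independent across different $|\mathbf m|$ (they have distinct orders of vanishing at $p_0$), so $-\log N_j(G_j, G_j)$ has infinite rank on $D \times D$.

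In the case that every $\mu_j > 0$, the right-hand side is a sum of PSD kernels with positive weights, hence PSD itself, and must have rank $\le n$. If any $G_j$ were non-constant, the right-hand side would have infinite rank, a contradiction. Thus every $G_j$ is constant, the right-hand side vanishes, and so $F$ is constant, establishing both conclusions in this case.

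The main obstacle is the general case of arbitrary real $\mu_j$, where mixed signs allow cancellations between positive and negative contributions and the naive rank comparison fails. To extract $F$ constant, I would rearrange to
\[
\sum_{i=1}^n F_i(z) \overline{F_i(w)} + \sum_{\mu_j < 0} |\mu_j| g_j \bigl[-\log N_j(G_j(z), G_j(w))\bigr] = \sum_{\mu_j > 0} \mu_j g_j \bigl[-\log N_j(G_j(z), G_j(w))\bigr],
\]
so that both sides are sums of PSD kernels. The hard part is then to match Koranyi sub-kernel contributions bidegree-by-bidegree and rule out the ``accidental'' cancellations that could support a nonzero finite-rank $F$-contribution on the left. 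Completing this matching rigorously --- exploiting the positivity of the Faraut--Koranyi coefficients, the distinct irreducibilities of the $\Omega_j$, and the 1-dimensionality of $D$ --- is the principal technical step in proving that $F$ must still be constant.
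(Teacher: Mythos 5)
Your argument, as written, proves the theorem only in the case where all $\mu_j$ are positive. There the PSD rank comparison is sound and is essentially the Calabi--Umehara mechanism: the left side is a Hermitian positive kernel of rank at most $n$, while the Faraut--Kor\'anyi expansion of $-\log N_j$ makes the right side a positive kernel of infinite rank whenever some $G_j$ is non-constant. But the first (and main) assertion of the theorem is that $F$ is constant for \emph{arbitrary real} $\mu_j$, and for that case you explicitly stop at "the principal technical step": you have identified the difficulty (cancellation between positive and negative contributions) without supplying an argument that overcomes it. This is a genuine gap, not a routine verification. Moreover, the proposed repair --- matching Kor\'anyi sub-kernels "bidegree-by-bidegree" --- is unlikely to close it: the pulled-back kernels $K_{\mathbf m}^{(j)}(G_j(z),G_j(w))$ are not bihomogeneous in $(z,w)$ for general holomorphic $G_j$, their Taylor supports overlap heavily, and linear independence of these pullbacks across different $j$ and $\mathbf m$ after composition with arbitrary maps is precisely the kind of statement that fails or requires the full strength of a different method.

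The paper avoids positivity altogether and runs an algebraicity/transcendence argument instead, which is why mixed signs cause it no trouble. After polarizing to $\sum_i f_i(z)\bar f_i(w)=\sum_j \mu_j\log K_j(G_j(z),\bar G_j(w))$, the authors first show (by differentiating in $w$, evaluating at $w=0$, and inverting a linear system via Cramer's rule) that each $f_i$ is a Nash algebraic, indeed rational, function of $G$. They then stratify by the transcendence degree $l$ of the field generated by the components of $G$ over the rational functions. If $l=0$, everything is Nash algebraic and the identity $\exp^{H}=\prod_k H_k^{\mu_k}$ with $H$ Nash algebraic forces $H$ constant, by a Puiseux-expansion growth comparison at a branch point or at infinity (exponential growth on the left versus polynomial growth on the right). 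If $l>0$, they substitute independent variables $X$ for a transcendence basis of the $g_{j\alpha}$, show the functional identity persists identically in $X$, and reduce again to the $l=0$ contradiction. If you want to salvage your approach, the all-positive case can be kept as a short self-contained proof of the "furthermore" clause, but the general case needs the algebraicity machinery (or some substitute of comparable strength).
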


\begin{corollary}\label{corollary}
There does not exist a K\"ahler manifold $(X, \omega_X)$ that can be
holomorphic isometrically embedded into the complex Euclidean space
 $(\CC^n, \omega_{\CC^n})$ and  also  into  a Hermitian symmetric space of noncompact type $(\Omega, \omega_\Omega)$.

\end{corollary}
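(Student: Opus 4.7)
The plan is to reduce from a general Kähler manifold $X$ to the one-dimensional situation covered by Theorem \ref{main1}, via a generic holomorphic disk in $X$.

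First I would invoke the classical decomposition of a Hermitian symmetric space of noncompact type: write $\Omega = \Omega_1 \times \cdots \times \Omega_J$ with each $\Omega_j$ an irreducible bounded symmetric domain in $\CC^{m_j}$, so that the canonical invariant Kähler metric takes the form $\omega_\Omega = \sum_{j=1}^J \mu_j\, \omega_{\Omega_j}$ for some positive constants $\mu_j$, with $\omega_{\Omega_j}$ the Bergman metric on $\Omega_j$. Denoting the hypothesized holomorphic isometric embeddings by $F: X \to \CC^n$ and $G = (G_1, \ldots, G_J): X \to \Omega$, the isometry condition then reads
$$F^*\omega_{\CC^n} \;=\; \omega_X \;=\; \sum_{j=1}^J \mu_j\, G_j^*\omega_{\Omega_j} \quad \text{on } X.$$

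Next, arguing by contradiction, I would pick any point $p \in X$ and a non-constant holomorphic map $\phi: D \to X$ from a disk $D \subset \CC$ with $\phi(0) = p$ and $d\phi(0) \neq 0$ (for instance, restrict to a coordinate direction in a chart around $p$). Pulling the displayed identity back by $\phi$ gives
$$(F \circ \phi)^*\omega_{\CC^n} \;=\; \sum_{j=1}^J \mu_j\, (G_j \circ \phi)^*\omega_{\Omega_j} \quad \text{on } D,$$
with all $\mu_j > 0$. Theorem \ref{main1} then forces $F \circ \phi$ to be constant on $D$. However, since $F$ is an isometric immersion, the differential $dF(p)$ is injective, hence $d(F \circ \phi)(0) = dF(p) \circ d\phi(0) \neq 0$, contradicting the constancy of $F \circ \phi$.

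This is essentially a routine reduction, so I expect no real obstacle beyond recognizing the correct one-dimensional slice. The only structural input beyond Theorem \ref{main1} is the decomposition of $(\Omega, \omega_\Omega)$ into its irreducible factors with positive weights, which is a classical fact about Hermitian symmetric spaces of noncompact type.
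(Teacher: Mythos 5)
Your reduction is correct and is precisely the argument the paper intends: the corollary is deduced from Theorem \ref{main1} by decomposing $\Omega$ into irreducible factors with positive weights on the Bergman metrics and restricting the two embeddings to a holomorphic disk through a point of $X$, whereupon the constancy of $F\circ\phi$ contradicts the immersion property. The paper leaves this slicing step implicit, but your write-up matches its intended route exactly.
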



\bigskip

{\bf Acknowledgement}: We thank  Di Scala for helpful communication
related to this work. Indeed, this short paper is motivated by the
question raised in his communication, that is answered by Corollary \ref{corollary}.

\bigskip

\bigskip

\section{Proof of Theorem \ref{main1}}

Our  proof fundamentally uses  ideas developed in  our previous work
\cite{HY}. Let $D$ be  a domain in ${\mathbb C}$. Let $F=(f_1,
\cdots, f_n): D \rightarrow \CC^n,\ \  G=(G_1, \cdots, G_J): D
\rightarrow \Omega_1 \times \cdots \times \Omega_J$ be holomorphic
maps satisfying equation (\ref{isometry}). Without loss of
generality, assume that $0 \in D$ and  $F(0)=0, G(0)=0$. We argue by
contradiction by assuming that $F$ is not constant. 
By equation (\ref{isometry}), we  have

$$\partial\bar\partial \left(\sum_{i=1}^n |f_i(z)|^2 \right)= \sum_{j=1}^J \mu_j \partial\bar\partial \log K_j(G_j(z), \overline{G_j(z)})~~\text{for~~}z \in D,$$
where $K_j(\xi, \overline{\eta})= \sum_l h_{jl}(\xi)
\overline{h_{jl}(\eta)}$ is the Bergman kernel on $\Omega_j$ and
$\{h_{jl}(\xi)\}$ is an orthonormal  basis of $L^2$ integrable
holomorphic functions over $\Omega_j$. Note that $\Omega_j$ is a complete circular domain in the Harish-Chandra realization. Therefore,
the Bergman kernel of $\Omega_j$ satisfies the identity $K_j(e^{\sqrt{-1}\theta}\xi, \overline{e^{\sqrt{-1}\theta}\eta}) = K_j(\xi, \overline{\eta})$
 for any $\theta \in \RR$ and any $\xi, \eta \in \Omega_j$. This implies  $K_j(e^{\sqrt{-1}\theta}\xi, 0) = K_j(\xi, 0)$. Therefore $K_j(\xi, 0)$ is a positive
  constant. In another word, $K_j(\xi, \overline{\eta})$ does not contain any nonconstant pure holomorphic terms in $\xi$. Similarly,  $K_j(\xi, \overline{\eta})$
  does not contain any nonconstant pure anti-holomorphic terms in $\eta$. Hence $K_j(\xi, \overline{\xi})$ does not contain nonconstant pluriharmonic
   terms in $\xi$. After normalization, we can assume tha $K_j(\xi, 0) =1$.
By the standard argument in \cite{CU}, one can get rid of $\partial\bar\partial$ to obtain the following functional identity by comparing the
 pure holomorphic and anti-holomorphic terms in $z$:

\begin{equation}\label{function}
\sum_{i=1}^n |f_i(z)|^2 = \sum_{j=1}^J \mu_j \log K_j(G_j(z),
\overline{G_j(z)}) ~~\text{~for~any~}z \in D.
\end{equation}
%
After polarization, (\ref{function}) is equivalent to
\begin{equation}\label{polarization}
\sum_{i=1}^n f_i(z) \bar{f_i}(w) =\sum_{j=1}^J \mu_j \log
K_j(G_j(z), \bar{G_j}(w)) ~~\text{~for~~}(z, w) \in D\times
\hbox{conj}({D}),
\end{equation}
where $\bar{f}_i(w) = \overline{f_i(\overline{w})}$ and
$\hbox{conj}({D})=\{z \in \CC | \bar z \in D\}$. Notice  that the
Bergman kernel $K_j(\xi, \eta)$ is a rational function on $\xi$ and
$\eta$ for the bounded symmetric domain $\Omega_j$ (\cite{FK}). From
this, we  have the following algebraicity lemma. Here, we recall
that a function $H$ is called a holomorphic Nash algebraic function
over $V \subset {\mathbb C}^\kappa$ if $H$ is holomorphic over $V$
and there is a non-zero polynomial $P(\eta, X)$ in $(\eta, X)$ such
that $P(\eta,H(\eta))\equiv 0$ for $\eta \in V$.

\begin{lemma}\label{algebraicity}
For any $1\leq i \leq n$, $f_i(z)$ can be written as a holomorphic
Nash algebraic function in $G(z)=(G_1(z), \cdots, G_J(z))$, after
shrinking $D$ toward the origin if needed.
\end{lemma}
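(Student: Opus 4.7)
The plan is to differentiate the polarized identity (\ref{polarization}) repeatedly in $w$ and specialize to $w=0$, using the rationality of the Bergman kernels $K_j$ together with the normalization $K_j(\xi,0)\equiv 1$ to produce a family of rational relations of the form $\sum_{i=1}^n c_{k,i}\,f_i(z)=R_k(G(z))$; a finite invertible subsystem will then express each $f_i(z)$ as a rational, hence Nash algebraic, function of $G(z)$.

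Concretely, for each integer $k\ge 0$ I would apply $\partial_w^k$ to (\ref{polarization}) and set $w=0$. The left-hand side becomes $\sum_{i=1}^n c_{k,i}\,f_i(z)$ with constants $c_{k,i}:=\partial_w^k\bar f_i(w)|_{w=0}$. On the right, iterating the chain rule on the composition $w\mapsto \bar G_j(w)\mapsto \log K_j(G_j(z),\bar G_j(w))$ (Fa\`a di Bruno) writes $\partial_w^k\log K_j(G_j(z),\bar G_j(w))|_{w=0}$ as a polynomial in the constants $\bar G_j^{(l)}(0)$ for $1\le l\le k$ and in the logarithmic derivatives $\partial_{\bar\eta}^l\log K_j(\xi,\bar\eta)|_{\bar\eta=0,\,\xi=G_j(z)}$. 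Using the standard identities expressing $\partial^l\log F$ as a polynomial in $\partial^m F/F$, the rationality of $K_j$ from \cite{FK}, and the normalization $K_j(\xi,0)\equiv 1$, each such logarithmic derivative at $\bar\eta=0$ is a rational function of $\xi$ whose denominator is a polynomial not vanishing at $\xi=0$. Summing over $j$ and multiplying by $\mu_j$ then yields
\begin{equation*}
\sum_{i=1}^n c_{k,i}\,f_i(z)=R_k\bigl(G_1(z),\dots,G_J(z)\bigr),
\end{equation*}
with $R_k$ rational and, since $G(0)=0$, holomorphic on a sufficiently small neighborhood of $0$ in $D$.

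To invert this system, let $r$ denote the $\CC$-linear dimension of $\{f_1,\dots,f_n\}$ inside the space of holomorphic functions on $D$. A unitary change of coordinates in $\CC^n$ preserves $\sum_i|f_i|^2$ and hence the whole setup, so I may assume $f_1,\dots,f_r$ are linearly independent while $f_{r+1}=\cdots=f_n=0$. The row vectors $(c_{k,1},\dots,c_{k,r})_{k\ge 0}$ then span $\CC^r$, since a nontrivial relation $\sum_{i\le r}\lambda_i c_{k,i}=0$ for every $k$ would translate, after conjugation, to $\sum_i\bar\lambda_i f_i\equiv 0$, contradicting independence. Choosing indices $k_1,\dots,k_r$ such that $(c_{k_l,i})_{1\le l,i\le r}$ is invertible and solving, each $f_i$ is displayed as a $\CC$-linear combination of $R_{k_1}(G(z)),\dots,R_{k_r}(G(z))$, hence as a rational function of $G(z)$; the vanishing components are trivially so. Rational in $G(z)$ is in particular Nash algebraic in $G(z)$, which is the desired conclusion.

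The step I expect to require the most care is the rationality claim for the right-hand side of (\ref{polarization}) after iterated $w$-differentiation: one must verify that the only denominators produced by iterated logarithmic differentiation are powers of $K_j(\xi,0)$, so that the normalization $K_j(\xi,0)\equiv 1$ actually eliminates them and leaves each $R_k$ holomorphic at the origin. Once this regularity is in place, the remainder is linear algebra on the neighborhood of $0$ afforded by the permitted shrinking of $D$.
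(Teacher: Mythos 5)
Your proposal is correct and follows essentially the same route as the paper: differentiate the polarized identity (\ref{polarization}) in $w$, evaluate at $w=0$ to obtain a constant-coefficient linear system for $(f_1,\dots,f_n)$ whose right-hand sides are rational in $G(z)$ (thanks to the rationality of the Bergman kernels and the normalization $K_j(\xi,0)\equiv 1$), and then invert a full-rank subsystem. The only difference is cosmetic bookkeeping for the rank deficiency: you normalize by a unitary so that the linearly dependent components of $F$ vanish identically, whereas the paper augments the system with basis vectors of the orthogonal complement of $\mathrm{Span}\{D^{\delta}(\bar F(w))|_{w=0}\}$ and applies Cramer's rule.
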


\begin{proof}
The proof is similar  to the algebraicity lemma  in Proposition 3.1
of \cite{HY}.
Write
$D^\delta={\partial^\delta \over\partial w^\delta}$. Applying   the
 differentiation $\partial \over\partial w$ to equation (\ref{polarization}), we get for $w$ near $ 0$ the following:
\begin{equation}\label{james}
\sum_{i=1}^{n} f_i(z) {\partial \over\partial w} \bar {f_i}(w)=
\sum_{j=1}^{J} \mu_j \frac{{\partial \over\partial w} K_j(G_j(z),
\bar{G_j}(w))} {K_j(G_j(z), \bar{G_j}(w))}.
\end{equation}
We can rewrite (\ref{james}) as
follows:
\begin{equation}\label{james-3}
F(z) \cdot D^1(\bar F(w)) = \phi_1(w,  {G_1}(z), \cdots, {G_J}(z)),
\end{equation}
where $F=(f_1, \cdots, f_n)$, and  $\phi_1(w, X_1, \cdots, X_J)$ is
Nash algebraic  in $(X_1,\cdots,X_J)$ for each fixed  $w$,  as the
Bergman kernel functions $K_j(\xi, \overline\eta)$ are rational
functions. Now, differentiating (\ref{james-3}), we get for any
$\delta$ the following equation

\begin{equation}
\label{james-007-4} F(z) \cdot D^{\delta}(\bar F(w))
=\phi_{\delta}(w, G_1(z), \cdots, G_J(z)).
\end{equation}
Here for $\delta> 0$,
$\phi_{\delta}(w, X_1, \cdots, X_J)$ 
is Nash algebraic in
$X_1, \cdots, X_J$ for any fixed $w$. 

Now, let ${\mathcal
L}:=\hbox{Span}_{\mathbb C}\{ D^{\delta}(\bar F(w))|_{w=0}\}_{\delta\ge
1}$ be a vector subspace of ${\mathbb C}^{n}$. Let
$\{D^{\delta_j}(\bar F(w))|_{w=0}\}_{j=1}^{\tau}$ be a basis for
$\mathcal L$. Then for a small open disc $\Delta_0$ centered at $0$ in
${\mathbb C}$, $\bar F(\Delta_0)\subset {\mathcal L}.$ Indeed, for any
$w$ near $0$, we have from the Taylor expansion that
$$\bar F(w)=\bar F(0)+\sum_{ \delta \ge 1}{D^{\delta}(\bar F)(0)\over\delta
!}w^{\delta}=\sum_{ \delta \ge 1}{D^{\delta}(\bar F)(0)\over\delta
!}w^{\delta}\in {\mathcal L}.$$

Now, let $\nu_j
~(j=1 \cdots, n-\tau)$ be a basis of the Euclidean orthogonal
complement of ${\mathcal L}.$  Then, we have

\begin{equation}
\label{james-007-06} F(z)\cdot \nu_{j} =0, ~~ \text{for~each}~~ j=1,\cdots,
n-\tau.
\end{equation}
Consider the system consisting of (\ref{james-007-4}) at $w=0$ (with
$\delta=\delta_1,\cdots,\delta_\tau$) and (\ref{james-007-06}).
The linear coefficient matrix in the left hand side of
the system at $w=0$ with respect to $F(z)$ is
\begin{equation}\notag
\begin{bmatrix}
D^{\delta_1}(\bar F(w))|_{w=0}\\
\vdots\\
D^{\delta_\tau}(\bar F(w))|_{w=0}\\
\nu_1\\
\vdots\\
\nu_{n-\tau}
\end{bmatrix}
\end{equation}
and is obviously invertible. Note that the right hand side of the
system of equations consisting of (\ref{james-007-4}) at $w=0$ (with
$\delta=\delta_1,\cdots,\delta_\tau$) and
is Nash algebraic in  
$G_1(z), \cdots, G_J(z)$. By  Gramer's rule, there exists a Nash
algebraic function $\hat F(X_1, \cdots, X_J)$ in all variables $X_1,
\cdots, X_J$ such that $F(z)=\hat F (G_1(z), \cdots, G_J(z))$ near
$z=0$. In fact, in our setting here, we can make $\hat{F}$
holomorphically rational in its variables.
\end{proof}

Let $G=(G_1, \cdots, G_J) = (g_{11}, \cdots, g_{1 m_1}, \cdots, g_{J1}, \cdots, g_{J m_J})$.   Let $\mathfrak{R}$ be the field of  rational functions
in $z$ over $D$.  Consider the field
  extension $$\mathfrak{F}=\mathfrak{R}( g_{11}(z), \cdots, g_{J m_J}(z) ),$$
  namely, the smallest subfield of meromorphic function field over $D$ containing  rational functions and $g_{11},\cdots, g_{J m_J}$.
    Let $l$ be the transcendence degree of the field extension
   $\mathfrak{F} / \mathfrak{R}$.

 \medskip

   If $l = 0$, then each element in $\{g_{11}(z), \cdots, g_{Jm_J}(z) \}$ is a Nash algebraic function. Hence by Lemma \ref{algebraicity}, each $f_i(z)$ is
   also Nash algebraic. In this case, we arrive at a contradiction by the following lemma together with Equation (\ref{polarization}).

   \begin{lemma} Let $V \subset \CC^\kappa$ be a connected open set. Let $H_1(\xi_1, \cdots, \xi_\kappa), \cdots,  H_K(\xi_1, \cdots, \xi_\kappa)$
    and $H(\xi_1, \cdots, \xi_\kappa)$  be  holomorphic Nash algebraic functions on $V$.
    Assume that
   $$\exp^{H(\xi_1, \cdots, \xi_\kappa)} =  \prod_{k=1}^K \left( H_k(\xi_1, \cdots, \xi_\kappa) \right)^{\mu_k} \ \xi\in V,$$ for certain
    real numbers $\mu_1, \cdots, \mu_K$. Then
    $H(\xi_1,\cdots,\xi_\kappa)$ is constant.
   \end{lemma}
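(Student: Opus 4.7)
The plan is to reduce to the one-variable case, extend the functions meromorphically to a compact Riemann surface, and contradict the identity by comparing growth rates at a pole of $H$. First, I would fix $\xi_0 \in V$ and restrict every function to a generic complex line $\xi(t) = \xi_0 + tv$ in $V$. For generic $v \in \CC^\kappa$ the defining polynomial relations of the Nash algebraic functions remain non-trivial along the line, so $H(\xi(t))$ and $H_k(\xi(t))$ are Nash algebraic, hence algebraic, functions of the single variable $t$. The identity $e^{H} = \prod_k H_k^{\mu_k}$ survives restriction. If every such restriction is constant in $t$, then letting $v$ range over an open set of directions forces $H$ to be constant near $\xi_0$, and hence, by connectedness, on all of $V$.

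In the resulting one-variable setting, let $X$ be the connected compact Riemann surface whose function field is the finite extension $\CC(t)(H, H_1, \dots, H_K)$ of $\CC(t)$. All of $H, H_1, \dots, H_K$ extend meromorphically to $X$, and after possibly shrinking, the open domain of $t$ lifts to an open subset $\tilde V \subset X$ disjoint from the discrete set $Z$ of zeros and poles of these meromorphic functions. Taking $\log|\cdot|$ of both sides of $e^{H} = \prod_k H_k^{\mu_k}$ yields the single-valued harmonic identity
\[
\mathrm{Re}\, H(\xi) = \sum_{k=1}^{K} \mu_k \log |H_k(\xi)| \qquad \text{on } \tilde V,
\]
which by the identity theorem for harmonic functions on the connected set $X \setminus Z$ extends to all of $X \setminus Z$. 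This step deliberately sidesteps the multivalued branches of $H_k^{\mu_k}$ with real exponent $\mu_k$.

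If $H$ were non-constant on $X$ it would possess a pole $p_0$ of some order $m \ge 1$, with leading Laurent coefficient $c_{-m} \ne 0$ in a local coordinate $u = r e^{\sqrt{-1}\theta}$ centered at $p_0$. Then $\mathrm{Re}\, H = |c_{-m}|\, r^{-m} \cos(\arg c_{-m} - m\theta) + O(r^{-m+1})$ near $p_0$, so $\mathrm{Re}\, H \to +\infty$ like $r^{-m}$ along every direction $\theta$ with $\cos(\arg c_{-m} - m\theta) > 0$. On the right-hand side, however, $\log |H_k(u)| = \mathrm{ord}_{p_0}(H_k)\, \log r + O(1)$ near $u = 0$, and consequently $\sum_k \mu_k \log |H_k|$ is only $O(|\log r|)$. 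Polynomial-versus-logarithmic blow-up at $p_0$ is impossible, contradicting the extended identity; therefore $H$ must be constant. The main technical obstacle is precisely the harmonic analytic continuation from $\tilde V$ to $X \setminus Z$, which the passage to $\log|\cdot|$ is designed to enable without one having to track the monodromy of $\prod_k H_k^{\mu_k}$.
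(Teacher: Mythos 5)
Your proof is correct, and while it rests on the same underlying idea as the paper's --- a growth-rate contradiction at a point where the algebraic function $H$ blows up --- the technical route is genuinely different and in some respects cleaner. The paper also reduces to $\kappa=1$, but then works with the minimal polynomial $A_d(\xi)X^d+\cdots+A_0(\xi)$ of $H$, locates a finite point or $\infty$ where some branch of $H$ blows up, continues the identity $e^{H}=\prod_k H_k^{\mu_k}$ holomorphically along a path to a neighborhood of that point, writes a Puiseux expansion $H(\xi)=a_{\beta_0}\xi^{\beta_0/N_0}+o(|\xi|^{\beta_0/N_0})$, and compares the exponential growth of $|e^{H^{(*)}(x)}|$ along the positive real axis with the polynomial growth of the right-hand side. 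Your version replaces the Puiseux/branch-tracking step by passing to the compact Riemann surface $X$ of the field $\CC(t)(H,H_1,\dots,H_K)$ and, crucially, by taking $\log|\cdot|$ first, so that the identity becomes the single-valued real-analytic equation $\mathrm{Re}\,H=\sum_k\mu_k\log|H_k|$, which propagates to all of $X\setminus Z$ by the identity theorem; the contradiction then becomes $r^{-m}$ versus $O(|\log r|)$ at an honest pole of $H$ on $X$ rather than exponential versus polynomial at $\infty$. What your approach buys is a clean treatment of the multivaluedness of $\prod_k H_k^{\mu_k}$ for real $\mu_k$ (the paper must implicitly track which branches of $H$ and $H_k$ correspond under continuation, and asserts rather briskly that the continued identity holds near $\infty$), at the cost of invoking the existence of the compact Riemann surface associated to a finite extension of $\CC(t)$; the paper's argument is more elementary in its toolkit but fussier in its bookkeeping. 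The only points worth tightening in your write-up are the genericity claim for the restriction to a line (one should say that the leading coefficient of the minimal polynomial does not vanish identically along a generic line, which is what guarantees algebraicity of the restriction) and a remark that a small punctured coordinate disk about the pole $p_0$ lies in $X\setminus Z$ so that the extended harmonic identity is actually available where you compare asymptotics; both are routine.
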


   \begin{proof} Suppose that $H$ is not constant. After a linear transformation in $\xi$, if needed, we can assume, without loss of generality,
    that,  $H(\xi)$ is not  constant for a certain fixed $\xi_2, \cdots, \xi_\kappa$.
    Then $H$ is a non-constant  Nash-algebraic holomorphic   function in $\xi_1$ for such fixed  $\xi_2, \cdots, \xi_\kappa$.
     Hence, we can assume  that $\kappa=1$ to achieve a contradiction.
 Write $H=H(\xi)$ and $H_k=H_k(\xi)$ for each $1 \leq k \leq \kappa$. Use  $S \subset \CC$ to denote the union  of branch points, poles and zeros  of
     $H(\xi)$ and
     $H_k(\xi)$ for each $k$.  Given a  $p \in \CC \setminus S$ and a real curve in $\CC \setminus S$ connecting $p$ and $V$,
     by holomorphic continuation, the following equation holds on an open neighborhood of the curve:
   \begin{equation}\label{trans}
  \exp^{H(\xi)}  =\prod_{k=1}^K \left( H_k(\xi) \right)^{\mu_k}.
  \end{equation}
Assume that the minimal  polynomial of $H$ is given by $p(\xi, X)=
A_d(\xi)X^d + \cdots + A_0(\xi)$ such that $p(\xi, H(\xi)) \equiv
0$. Denote the branches of $H$ by $\{H^{(1)}, \cdots, H^{(d)}\}$ and
these branches can be obtained through $H$ by holomorphic
continuation.  Denote the corresponding branches for $H_k$ obtained
by holomorphic continuation by $\{H_k^{(1)}, \cdots, H_k^{(d)}\}$.
Let $\xi_0$ be a zero of $\frac{A_d}{A_0}$ or $\xi_0=\infty$ if
$\frac{A_d}{A_0}$ is a constant. Then some branches of $H$ blow up
at $\xi_0$. Without loss of generality, assume that $\xi_0=\infty$.
Assume that (\ref{trans}) holds in a neighborhood of $\infty$ after
holomorphic continuation from the original equality. By the Puiseux
expansion, we can assume that $$H(\xi)=\sum_{\beta=\beta_0,
\beta_0-1,\cdots,
-\infty}a_{\beta}\xi^{\beta/N_0}=a_{\beta_0}\xi^{\beta_0/N_0}+o(|\xi|^{\beta_0/N_0})$$
for $|\xi|>>1$ with $a_{\beta_0}\not = 0$ and $\beta_0, N_0 >0$.
Without loss of generality, we assume that  $a_{\beta_0}>0$. Now,
when $\xi\ra \infty $ along the positive $x$-axis, for the branch
$H^{(*)}$, which corresponds to $\xi^{\beta_0/N_0}$ taking positive
value along this ray in its Puiseux expansion, we have
 $|e^{H^{(*)}(x)}|\ge e^{\left(\frac{a_{\beta_0}}{2}x^{\frac{\beta_0}{N_0}}\right)}$ 
 as $x\ra +\infty$. However, the
 right hand side of (\ref{trans}) grows at most
 polynomially. This is a contradiction.
 \end{proof}

\bigskip

Now, assume that $l>0$.  By re-ordering the lower index, let
$\mathcal{G}= \{g_1(z), \cdots, g_l(z)\}$ be the maximal algebraic
  independent subset in $\mathfrak{F}$. It follows that the transcendence degree of $\mathfrak{F} / \mathfrak{R}(\mathcal{G})$ is 0.
  Then   there exists a small connected open subset $U$ with $0\in
  \overline{U}$ such that for each $j_\a$ with $g_{j_\a}\not \in \mathcal{G}$,
   we have a holomorphic Nash algebraic function $\hat g_{j_ \alpha}(z, X_1, \cdots, X_l) $ in the neighborhood
   $\hat U$ of $\{(z, g_1(z), \cdots, g_l(z)) | z\in U\}$ in $\CC\times\CC^l$ such
   that it holds that
    $g_{j_ \alpha}(z) = \hat g_{j \alpha} (z, g_1(z), \cdots, g_l(z))$ for any $z \in U$.
    Then by Lemma \ref{algebraicity}, for each $1\leq i \leq n$, there exists  a holomorphic Nash algebraic function $\hat f_i(z, X_1, \cdots, X_l)$ in $\hat U$
   such that $f_i(z) = \hat f_i(z, g_1(z), \cdots, g_l(z))$ for  $z \in U$. 
Define $$\Psi(z, X, w) = \sum_{i=1}^n \hat f_i(z, X) \bar f_i(w)-
\sum_{j=1}^J \mu_j \log K_j(\cdots, X_\gamma, \cdots, \hat g_{j
\alpha}(z, X), \cdots, \bar{g}_{j1}(w), \cdots, \bar{g}_{jm_j}(w))$$
and
$$\Phi (z, X, w)= \frac{\partial}{\partial w} \Psi (z, X, w)$$ for $(z, X, w) \in \hat U \times \hbox{conj}(U)$,
where $X=(X_1, \cdots, X_l)$.

\begin{lemma}
For any $w$ near 0 and any $(z, X)\in \hat U$, $\Phi(z, X, w) \equiv 0$. As a consequence, $\Psi(z, X, w) \equiv 0$.
\end{lemma}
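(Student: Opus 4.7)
The plan is to exploit the fact that passing from $\Psi$ to $\Phi=\partial_w\Psi$ turns the non-algebraic piece $\log K_j$ into the rational logarithmic derivative $(\partial_w K_j)/K_j$, so that for each fixed $w$ the function $(z,X)\mapsto\Phi(z,X,w)$ becomes Nash algebraic in $(z,X)$. Combined with algebraic independence of $g_1(z),\ldots,g_l(z)$ over $\CC(z)$, this should force $\Phi\equiv 0$ on all of $\hat U$, not only along the graph $\Gamma:=\{(z,g_1(z),\ldots,g_l(z)):z\in U\}$ on which vanishing is automatic.

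First, by the definitions of $\hat f_i$ and $\hat g_{j\alpha}$ together with the polarized identity \eqref{polarization}, the function $\Psi(z,g_1(z),\ldots,g_l(z),w)$ vanishes identically in $(z,w)\in U\times\hbox{conj}(U)$, so differentiating in $w$ yields $\Phi(z,g_1(z),\ldots,g_l(z),w)\equiv 0$ as well. Next I would verify that, for each fixed $w$ near $0$, the map $(z,X)\mapsto\Phi(z,X,w)$ is Nash algebraic on $\hat U$: its $(z,X)$-dependence enters only through $\hat f_i(z,X)$, the coordinates $X_\gamma$, and $\hat g_{j\alpha}(z,X)$, combined via the rational operation $(\partial_w K_j)/K_j$ and $\CC$-linear combinations whose coefficients $\bar f_i'(w)$ and $\bar g_{jk}'(w)$ are constants with respect to $(z,X)$. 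Consequently $\Phi(\cdot,\cdot,w)$ satisfies a nonzero polynomial relation $P_w(z,X,\Phi(z,X,w))\equiv 0$ with $P_w\in\CC[z,X_1,\ldots,X_l,Y]$, which we take to be of minimal degree in $Y$.

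The main step is the algebraic-independence argument. Suppose, for contradiction, that $\Phi(\cdot,\cdot,w_0)\not\equiv 0$ on $\hat U$ for some $w_0$. Minimality of $P_{w_0}$ forces $P_{w_0}(z,X,0)\not\equiv 0$, since otherwise $Y$ would divide $P_{w_0}$ and dividing by $Y$ would produce a relation of strictly smaller degree in $Y$. Restricting to $\Gamma$ gives $P_{w_0}(z,g_1(z),\ldots,g_l(z),0)\equiv 0$ as a function of $z\in U$; since $g_1(z),\ldots,g_l(z)$ are algebraically independent over $\CC(z)$, this forces $P_{w_0}(z,X,0)\equiv 0$ as a polynomial in $(z,X_1,\ldots,X_l)$, a contradiction. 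Hence $\Phi(z,X,w)\equiv 0$ on $\hat U$ for every $w$ near $0$.

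Finally, integrating $\partial_w\Psi\equiv 0$ in $w$ gives $\Psi(z,X,w)\equiv\Psi(z,X,0)$, and at $w=0$ one has $\bar f_i(0)=0$ (since $F(0)=0$) and $\bar g_{jk}(0)=0$ (since $G(0)=0$), so by the normalization $K_j(\cdot,0)\equiv 1$ every term of $\Psi(z,X,0)$ vanishes. The delicate point is precisely the algebraic-independence step; differentiating once in $w$ is essential, since it is this operation that removes the $\log$ and restores Nash algebraicity of $\Phi$ in $(z,X)$ --- a structure $\Psi$ itself does not possess.
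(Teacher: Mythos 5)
Your proposal is correct and follows essentially the same route as the paper: fix $w_0$ with $\Phi(\cdot,\cdot,w_0)\not\equiv 0$, use Nash algebraicity of $\Phi(\cdot,\cdot,w_0)$ in $(z,X)$ to get a minimal polynomial with nonvanishing constant term $A_0(z,X)$, restrict to the graph of $(g_1,\ldots,g_l)$ where $\Phi$ vanishes to conclude $A_0(z,g_1(z),\ldots,g_l(z))\equiv 0$, and contradict the algebraic independence of the $g_i$ over $\mathfrak{R}$. Your closing step ($\Psi(z,X,w)\equiv\Psi(z,X,0)=0$ via $F(0)=0$, $G(0)=0$, and $K_j(\cdot,0)\equiv 1$) just makes explicit what the paper asserts in one line.
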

\begin{proof}
Assume $\Phi(z, X, w) \not\equiv 0$. Then there exists $w_0$ near 0, such that $\Phi(z, X, w_0) \not\equiv 0$. Since $\Phi(z, X, w_0)$ is a Nash algebraic function in $(z, X)$, then there exists a holomorphic polynomial $P(z, X, t)=A_d(z, X)t^d + \cdots + A_0(z, X)$ of degree $d$ in $t$, with $A_0(z, X) \not\equiv0$ such that $P(z, X, \Phi(z, X, w_0)) \equiv 0$.

As $\Psi(z, g_1(z), \cdots, g_l(z), w) \equiv 0$ for $z\in {U}$, it
follows that $\Phi(z, g_1(z), \cdots, g_l(z), w_0) \equiv 0$ and
therefore $A_0(z, g_1(z), \cdots, g_l(z)) \equiv 0$. This means that
$\{g_1(z), \cdots, g_l(z)$\} are algebraic dependent over
$\mathfrak{R}$. This is a contradiction.

Since $\Psi(z, X, w)$ is holomorphic in $w$ and $\Psi(z, X, 0) \equiv 0$, then $\Psi(z, X, w) \equiv 0$.
\end{proof}

Now for any $(z, X, w) \in \hat U \times \hbox{conj}(U)$, we have
the following functional identity:
\begin{equation}\label{zero}
\sum_{i=1}^n \hat f_i(z, X) \bar f_i(w) =\sum_{j=1}^J \mu_j \log
K_j(\cdots, X_{\gamma}, \cdots, \hat g_{j \alpha}(z, X), \cdots,
\bar{g}_{j1}(w),
 \cdots, \bar{g}_{jm_j}(w)).\end{equation}

\begin{lemma}\label{nonconstant}

There exists $(z_0, w_0) \in U \times \hbox{conj}(U)$ such that
\begin{equation}\notag
\sum_{i=1}^n \hat f_i(z, X) \bar f_i(w) \not\equiv 0.\end{equation}
\end{lemma}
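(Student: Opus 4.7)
The plan is to argue by contradiction, using the fact that the Nash algebraic extensions $\hat f_i(z, X)$ were constructed precisely to interpolate the original functions $f_i$ along the graph of $(g_1, \cdots, g_l)$. So if the displayed sum vanished for every choice of $(z_0, w_0)$, we could restrict back to that graph and recover a vanishing condition on $F$ itself, contradicting the standing assumption that $F$ is not constant.

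More concretely, suppose to the contrary that for every $(z_0, w_0) \in U \times \hbox{conj}(U)$ the function $X \mapsto \sum_{i=1}^n \hat f_i(z_0, X) \bar f_i(w_0)$ is identically zero on the relevant slice of $\hat U$. Since $(z, g_1(z), \cdots, g_l(z))$ lies in $\hat U$ for every $z \in U$, and since $\hat f_i(z, g_1(z), \cdots, g_l(z)) = f_i(z)$ by the construction of $\hat f_i$, specializing $X = (g_1(z_0), \cdots, g_l(z_0))$ yields
\begin{equation}\notag
\sum_{i=1}^n f_i(z_0) \bar f_i(w_0) = 0 \quad \text{for all } (z_0, w_0) \in U \times \hbox{conj}(U).
\end{equation}

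Setting $w_0 = \bar{z_0}$ so that $\bar f_i(\bar{z_0}) = \overline{f_i(z_0)}$, this becomes $\sum_{i=1}^n |f_i(z_0)|^2 \equiv 0$ on $U$, forcing each $f_i$ to vanish on $U$ and hence, by the identity principle, on $D$. Thus $F \equiv 0$ on $D$, which is constant and contradicts the standing assumption made at the beginning of the proof of Theorem \ref{main1}. Therefore some $(z_0, w_0)$ must make the sum non-vanishing as claimed.

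I do not anticipate any real obstacle here: the only subtlety is being careful that the substitution $X = (g_1(z), \cdots, g_l(z))$ is legitimate, which is guaranteed by the fact that this graph lies inside $\hat U$ and that $\hat f_i$ agrees with $f_i$ on this graph by construction. Everything else is a routine polarization/diagonal argument.
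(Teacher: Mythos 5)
Your argument is correct and is essentially identical to the paper's own proof: the paper also assumes the contrary, substitutes $w=\overline{z}$ and $X=(g_1(z),\cdots,g_l(z))$ to get $\sum_{i=1}^n |f_i(z)|^2\equiv 0$ on $U$, and concludes $F\equiv 0$, contradicting the standing assumption that $F$ is non-constant. No further comment is needed.
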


\begin{proof}
Assume not. Letting $w=\overline{z}$ and $X=(g_1(z), \cdots, g_l(z))$, 
it follows that

$$\sum_{i=1}^n |f_i(z)|^2 = \sum_{i=1}^n \hat f_i(z, g_1(z), \cdots, g_l(z)) \bar f_i(\overline{z}) \equiv 0,\ \hbox{over}\ U.$$ This implies that $f_i(z) \equiv 0$ for all $1 \leq i \leq n$ and therefore contradicts to the assumption that $F=(f_1, \cdots, f_n)$ is a non-constant map.
\end{proof}

Choosing $z_0, w_0$ as in Lemma  \ref{nonconstant}, 
$\sum_{i=1}^n \hat f_i(z, X) \bar f_i(w)$ is a nonconstant
holomorphic Nash algebraic function in $X$ by Lemma
\ref{nonconstant} and by the fact that $$K_j(\cdots, X_{\gamma},
\cdots, \hat g_{j \alpha}(z, X), \cdots, g_{j1}(w), \cdots,
g_{jm_j}(w))$$ is also Nash algebraic in $X$ for all $j$ as the
Bergman kernel function of bounded symmetric domain is rational.
Hence we arrive at a contradiction by Lemma \ref{algebraicity}. Thus
$F$ must be a constant map.
Now if all $\mu_j's$ are further assumed to be positive, it is
obvious that $G$ must also be constant.
The proof of Theorem \ref{main1} is  complete.


\section{Further remarks}

A  Hermitian symmetric space $M$ of compact type can be
holomorphically isometrically embedded into the complex projective
space $\PP^N$ by the Nakagawa-Takagi embedding.
 Notice that the Fubini-Study metric $\omega_{\PP^N}$ on $\PP^N$ in a standard holomorphic chart $\{w_1, \cdots, w_N\}$ is given by $$\omega_{\PP^N} =
 \sqrt{-1}\partial\bar\partial \log(1+ \sum_j |w_j|^2) $$ up to the normalizing constant, which is also of the form $\partial\bar\partial \log K(w, \bar{w})$,
  where $K(w, \bar{w})$ is an algebraic function. Therefore the same argument yields the following theorem:

\begin{theorem} \label{main2}
Let $D \subset \CC$ be a connected open subset. 
If there are  holomorphic maps 
$F: D \rightarrow \CC^n$ and $G=(G_1,
\cdots, G_J): D \rightarrow \Omega_1 \times \cdots \times \Omega_J$ and $L = (L_1, \cdots, L_K): D \rightarrow \PP^{N_1} \times \cdots \times \PP^{N_K}$
such that

\begin{equation}\notag
F^*\omega_{\CC^n} =\sum_{j=1}^J \mu_j G_j^*\omega_{\Omega_j} + \sum_{k=1}^K \lambda_k {L_k}^*\omega_{\PP^{N_k}} ~~\text{on}~~D
\end{equation}
 for   real constants $\mu_1,
\cdots, \mu_J, \lambda_1, \cdots, \lambda_K$. Then $F$ is a constant
map. Moreover, if $\mu_j,\lambda_j$ are positive, $G$ and $L$ are
also constant map.
\end{theorem}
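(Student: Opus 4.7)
The plan is to mimic the proof of Theorem \ref{main1} essentially verbatim, exploiting the observation already noted before the statement: on any standard affine chart of $\PP^{N_k}$, the Fubini-Study potential is $\log K_{\PP^{N_k}}$ with $K_{\PP^{N_k}}(w,\bar w)=1+\sum_s |w_s|^2$, which is polynomial in $w$ and $\bar w$ (hence rational and Nash algebraic), and which satisfies $K_{\PP^{N_k}}(w,0)=1$. All the properties of $K_j$ used in the proof of Theorem \ref{main1}---rationality, the normalization $K(\xi,0)=1$, and the consequent absence of nonconstant pluriharmonic terms in $\log K(\xi,\bar\xi)$---hold identically for the $K_{\PP^{N_k}}$'s, so the $L_k$-factors can be handled on exactly the same footing as the $G_j$-factors throughout.

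First I would reduce to the case $F(0)=0$, $G(0)=0$, and $L_k(0)=0$ in a chosen affine chart of $\PP^{N_k}$, for each $k$. The last normalization is achieved by precomposing $L_k$ with a projective unitary transformation, which is a holomorphic isometry of $\omega_{\PP^{N_k}}$; one may further shrink $D$ toward $0$ so that each $L_k(D)$ lies in the chosen chart. Writing $L_k=(l_{k1},\ldots,l_{kN_k})$, the $\partial\bar\partial$ and polarization step then yields, on $D\times\hbox{conj}(D)$,
\begin{equation*}
\sum_{i=1}^n f_i(z)\bar f_i(w) = \sum_{j=1}^J \mu_j \log K_j(G_j(z),\bar G_j(w)) + \sum_{k=1}^K \lambda_k \log\bigl(1+\sum_s l_{ks}(z)\bar l_{ks}(w)\bigr).
\end{equation*}

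Next, I would re-run the algebraicity argument of Lemma \ref{algebraicity} with the enlarged tuple $(G_1,\ldots,G_J,L_1,\ldots,L_K)$ in place of $(G_1,\ldots,G_J)$: differentiating once in $w$ to eliminate the logarithms produces an identity whose right-hand side is rational in the components of $G$ and $L$, and Cramer's rule then expresses each $f_i(z)$ as a Nash algebraic function of those components. One then repeats the transcendence-degree dichotomy on the field $\mathfrak{F}=\mathfrak{R}(g_{11},\ldots,g_{Jm_J},l_{11},\ldots,l_{KN_K})$. The $l=0$ case is closed by the lemma on $\exp^H=\prod H_k^{\mu_k}$, whose statement only requires that the $H_k$'s be Nash algebraic---a condition satisfied by both the Bergman kernels $K_j$ and the factors $1+\sum_s l_{ks}\bar l_{ks}$. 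The $l>0$ case goes through with the $\Psi/\Phi$ construction, with additional Nash algebraic functions $\hat l_{k\beta}(z,X)$ expressing the non-basis components of $L$ in terms of a transcendence basis $X=(X_1,\ldots,X_l)$; the right-hand side of the analogue of (\ref{zero}) remains Nash algebraic in $X$, so the same final contradiction with the algebraicity lemma goes through.

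The only place I anticipate real care is the preliminary chart and normalization step: one must ensure that a single affine chart of each $\PP^{N_k}$ contains $L_k(0)$, that this chart's origin is $L_k(0)$ (arranged by projective unitary transformations), and that no nonconstant pluriharmonic terms obstruct the passage from the $\partial\bar\partial$-equation to the polarized functional identity. Once this is done, every subsequent step is a line-by-line translation of the proof of Theorem \ref{main1}, because $K_{\PP^{N_k}}$ has all the algebraic and analytic properties of $K_j$ that the argument exploits.
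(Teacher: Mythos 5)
Your proposal is correct and follows exactly the route the paper takes: the paper's entire proof of Theorem \ref{main2} is the observation that the Fubini--Study potential $1+\sum_s|w_s|^2$ is an algebraic (indeed polynomial) kernel with $K(w,0)=1$, so the argument for Theorem \ref{main1} applies verbatim with the $L_k$'s adjoined to the $G_j$'s. Your write-up in fact supplies more detail (the chart normalization and the enlarged transcendence basis) than the paper does.
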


Remark that the above constant $\mu_1, \cdots, \mu_J, \lambda_1,
\cdots, \lambda_K$ can be positive, negative or zero.
 In particular, Theorem \ref{main2} implies that the complex Euclidean space cannot be a relative to the product space of a bounded symmetric space and a Hermitian symmetric space of compact type. 
Note that, in \cite{DL2}, Di Scala and Loi showed that any bounded domain with Bergman metric and a  Hermitian symmetric space of compact type cannot be relatives. Combining their results, we actually can conclude that any Hermitian symmetric space of a particular type and the product
 of Hermitian symmetric spaces of two other types cannot be relatives. More precisely, we summarize the result as follows:

\begin{theorem} \label{main3}
Let $D \subset \CC$ be a connected open set. Let $\Omega, M, \CC^n$ be a Hermitian symmetric space
 of noncompact, compact and Euclidean type, respectively, equipped with the canonical metrics $\omega_\Omega, \omega_M, \omega_{\CC^n}$. 
If there exist non-constant holomorphic maps 
$F: D \rightarrow \CC^n$, $G: D \rightarrow \Omega$ and $L: D \rightarrow M$
such that

\begin{equation}\notag
a F^*\omega_{\CC^n} + b G^*\omega_{\Omega} + c {L}^*\omega_{M} =0~~\text{on}~~D
\end{equation}
 for real constants $a, b, c$, then it must holds that $a = b= c=0$.
 \end{theorem}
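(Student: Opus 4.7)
The plan is to argue by contradiction: assume $(a,b,c)\neq(0,0,0)$ and show that this forces one of $F$, $G$, $L$ to be constant, contradicting the hypothesis. The argument splits into two cases depending on whether $a=0$.

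If $a\neq 0$, I would rescale so that $a=1$ and compose $L$ with the Nakagawa--Takagi embedding $\iota:M\hookrightarrow\PP^{N}$, which is a holomorphic isometric immersion up to a positive constant $c_0$, i.e.\ $\iota^*\omega_{\PP^N}=c_0\,\omega_M$. Setting $\tilde L=\iota\circ L$, the hypothesized identity rewrites as
$$F^*\omega_{\CC^n} = \mu\, G^*\omega_\Omega + \lambda\, \tilde L^*\omega_{\PP^N}$$
with real constants $\mu=-b$ and $\lambda=-c/c_0$. Theorem \ref{main2} then forces $F$ to be constant, contradicting the hypothesis.

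If $a=0$, the identity becomes $b\,G^*\omega_\Omega+c\,L^*\omega_M=0$. When exactly one of $b,c$ vanishes, the corresponding semi-positive pullback vanishes identically and the associated map is constant. When $bc>0$, both semi-positive $(1,1)$-forms $G^*\omega_\Omega$ and $L^*\omega_M$ sum to zero, hence each vanishes individually, so $G$ and $L$ are both constant. The genuinely nontrivial subcase is $bc<0$: here $G^*\omega_\Omega=\lambda\,L^*\omega_M$ for some $\lambda>0$, so on the dense open subset of $D$ where $G'\neq 0$ the K\"ahler manifold $(D,G^*\omega_\Omega)$ admits holomorphic isometric immersions into the bounded symmetric domain $(\Omega,\omega_\Omega)$ via $G$ and into the compact Hermitian symmetric space $(M,\lambda\omega_M)$ via $L$.

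The main obstacle is to reject this last subcase, which I would do by invoking the Di Scala--Loi theorem recalled in the introduction: a bounded domain with its Bergman metric and a Hermitian symmetric space of compact type cannot be relatives. The only point to verify is that a positive rescaling of $\omega_M$ does not affect the obstruction, which is immediate: the Di Scala--Loi argument depends on the sign of the holomorphic sectional curvature and the compact-type structure, both of which are invariant under constant rescaling of the canonical metric. This contradicts the non-constancy of $G$ (and $L$) and completes the proof.
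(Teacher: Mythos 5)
Your proof is correct and follows essentially the same route as the paper: the authors give no separate argument for Theorem \ref{main3} beyond the remark that it follows by combining Theorem \ref{main2} (your case $a\neq 0$, after pushing $L$ forward by the Nakagawa--Takagi embedding and absorbing signs into the real constants) with the Di Scala--Loi non-relativity theorem for a bounded domain with its Bergman metric versus a compact-type Hermitian symmetric space (your case $a=0$, $bc<0$). You in fact spell out more than the paper does, namely the explicit sign analysis when $a=0$ and the one genuinely delicate point --- that the Di Scala--Loi obstruction must be checked to persist under a positive constant rescaling of $\omega_M$ --- which the paper passes over in silence.
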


Next, we let $(D_1\subset {\mathbb C}^n,\omega_1)$ and  $(D_2\subset
{\mathbb C}^m,\omega_2)$ be two K\"ahler manifolds with
$\omega_j=\sqrt{-1}\partial\bar{\partial} \log h_j(z,\bar{z})$. Here
$h_j(z,\bar{z})$ are real analytic functions in $z$. Assume that
$0\in D_j$ and $h_j$ ($j=1,2$) do not have any non-constant harmonic
terms in its Taylor expansion at the origin with $h_j(0,0)$ being
normalized to be $1$. $(D_1,\omega_1)$ and $(D_2,\omega_2)$ are
relative at $0$ if and only if there are non-constant holomorphic
maps $\phi_1: \Delta\ra D_1$ and $\phi_2: \Delta\ra D_2$ with
$\phi_j(0)=0$ such that $\phi_1^{*}(\omega_1)=\phi_2^{*}(\omega_2)$.
Here $\Delta$ is the unit disk in ${\mathbb C}^1$. As standard, this
happens if and only if
$$h_1(\phi_1(\tau),\overline{\phi_1(\tau)})=h_2(\phi_2(\tau),\overline{\phi_2(\tau)}).$$
Now, we let the real analytic set $M\subset D_1\times D_2\subset
{\mathbb C}^{n+m}$ be defined by $ h_1(z,\-{z})=h_2(w,\-{w})$ with $
(z,w)\in D_1\times D_2$. By the fact that $h_j$ serve as potential
functions of K\"ahler metrics near $0$, it is not hard to show that
$M$ must be regular at the origin. Then $(D_1,\omega_1)$ and
$(D_2,\omega_2)$ are relative at $0$ or near a point close to $0$ if
and only if inside $M$, there is a non-trivial  holomorphic curve
containing the origin.  Then this cannot happen if and only if $M$
is of D'Angelo finite type at $0$ [DA]. Hence, by what we proved
above, we have the following:

\begin{theorem}\label{main4}
Let $K_j(w,\bar{w})$ ($j=1,\cdots, \kappa$) be postively-valued
smooth Nash-algebraic functions in $(w,\bar{w})$ with $w(\in
{\mathbb C}^m)\approx 0$. Assume that the complex Hessian of $\log
K_j(w,\-{w})$ is positive definite for each $j$. Then for any
positive  real numbers $\mu_1,\cdots,\mu_\kappa$, the following
real-analytic hypersurface $M$ defined near the origin is of finite
D'Angelo type at $0$:
$$M:=\{(z,w)(\subset {\mathbb C}^{n+m})\approx (0,0):\ \
\sum_{j=1}^{n}|z_j|^2=\sum_{l=1}^{\kappa}\mu_l\log
K_l(w,\bar{w})\}.$$
\end{theorem}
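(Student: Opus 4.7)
The plan is to argue by contradiction via the (real-analytic) equivalence between infinite D'Angelo type of $M$ at $0$ and the existence of a non-trivial germ of holomorphic curve $\gamma = (F, G) : (\Delta, 0) \to (M, 0)$, with $F = (f_1, \ldots, f_n) : \Delta \to \CC^n$ and $G = (g_1, \ldots, g_m) : \Delta \to \CC^m$ holomorphic. Since $\gamma(\Delta) \subset M$, the defining equation of $M$ yields the potential identity
$$\sum_{i=1}^n |f_i(\tau)|^2 = \sum_{l=1}^\kappa \mu_l \log K_l(G(\tau), \overline{G(\tau)})$$
for $\tau$ near $0$ in $\Delta$. The plan is then to run the proof of Theorem \ref{main1} on this identity to conclude that $F \equiv 0$, and finally to invoke strict plurisubharmonicity to conclude that $G$ is constant as well.

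To run the proof of Theorem \ref{main1}, first polarize: writing $\bar g(\zeta) := \overline{g(\bar \zeta)}$ and treating $\zeta$ as an independent variable, one obtains
$$\sum_i f_i(\tau) \bar f_i(\zeta) = \sum_l \mu_l \log K_l(G(\tau), \bar G(\zeta))$$
on a neighborhood of $(0,0)$ in $\CC^2$. The only place where the proof of Theorem \ref{main1} uses the specific structure of the Bergman kernel is in Lemma \ref{algebraicity}, and there only through the Nash algebraicity of $K_l$, which is precisely the present hypothesis. Thus the Cramer's rule argument of Lemma \ref{algebraicity} still yields a Nash algebraic representation $f_i(\tau) = \hat f_i(g_1(\tau), \ldots, g_m(\tau))$. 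The subsequent dichotomy on the transcendence degree of $\CC(g_1, \ldots, g_m)$ over $\CC(\tau)$, together with the exponential lemma (whose Puiseux-expansion proof is purely Nash algebraic), then goes through verbatim and forces $F \equiv 0$.

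With $F \equiv 0$, the potential identity collapses to $\sum_l \mu_l \log K_l(G(\tau), \overline{G(\tau)}) \equiv 0$ near $\tau = 0$. Since each $\mu_l > 0$ and each $\log K_l$ has positive definite complex Hessian near $0$, the function $\Phi := \sum_l \mu_l \log K_l$ is strictly plurisubharmonic on a neighborhood of $0 \in \CC^m$; if $G$ were non-constant, the chain rule gives $\p_\tau \bar \p_\tau [\Phi(G(\tau), \overline{G(\tau)})] > 0$ at any $\tau$ with $G'(\tau) \neq 0$, contradicting the identical vanishing. Hence $G$ is also constant, so $\gamma$ itself is constant, contradicting its non-triviality. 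The main obstacle in executing this plan is confirming that Lemma \ref{algebraicity} and the exponential lemma of Section 2 really do extend from the rational Bergman kernels of bounded symmetric domains to general Nash algebraic positive kernels; this is essentially automatic, since both arguments are already phrased in the Nash algebraic category.
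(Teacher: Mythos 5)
Your proposal is correct and follows essentially the same route the paper takes: the paper derives Theorem \ref{main4} precisely by identifying infinite D'Angelo type with the existence of a non-trivial holomorphic curve in $M$ through the origin, reading off the potential identity from the defining equation, and observing that the algebraicity and transcendence-degree machinery of Theorem \ref{main1} only needs the $K_l$ to be Nash algebraic with plurisubharmonic logarithms, not actual Bergman kernels. Your added remarks (that the $\partial\bar\partial$-removal step is unnecessary here since the curve lies in $M$, and the explicit strict-plurisubharmonicity argument for the constancy of $G$) just fill in details the paper leaves implicit.
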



\end{document}